\documentclass[11pt]{article}

\usepackage[utf8]{inputenc}
\usepackage[T1]{fontenc}
\usepackage{amsmath, amssymb, amsthm, amsfonts}
\usepackage{url}
\usepackage{epsfig}
\usepackage{microtype}

\newtheorem{theorem}{Theorem}
\newtheorem{lemma}{Lemma}
\newtheorem{proposition}{Proposition}
\newtheorem{corollary}{Corollary}

\theoremstyle{definition}

\newtheorem{remark}{Remark}
\newtheorem{example}{Example}

\begin{document}

\begin{center}
{\LARGE \bf A formula of counting divisors in integers rings: a generalization of the divisor function $d_0(n)$} \\
\vskip 20pt
{\bf \'Angel Mart\'inez-Avelar}\\
{\it Departamento de Matem\'aticas, UAM-I, M\'exico}\\
{\tt avelar@xanum.uam.mx}\\
\vskip 10pt
{\bf Mario Pineda-Ruelas}\\
{\it Departamento de matem\'aticas, UAM-I, M\'exico}\\
{\tt mpr@xanum.uam.mx}\\
\end{center}

\vskip 30pt

\begin{abstract}
\noindent In this paper we establish a formal connection between the structure of ideals in integers rings and the theory of additive combinatorics. For integers rings with cyclic class groups, we prove a structural theorem demonstrating that every non-zero ideal can be decomposed into a maximal principal part and a product of ideals whose total length is bounded by the Davenport constant. With this decomposition we find divisors for generators of the ideal $I=(\alpha, \beta)$. The central result of this work is the derivation of a closed formula using character theory over finite abelian groups to count the exact number of zero-sum subsequences of a given sequence. Under the established correspondence between principal ideals and zero-sum sequences, this formula provides a precise counting of the principal ideal divisors of any given ideal, and therefore counting common divisors of generators of the ideal $I=(\alpha,\beta)$. This result constitutes a natural generalization of the classical divisor function $d_0(n)$ from unique factorization domains to any Dedekind domain with a finite class group. Finally, we characterize irreducible elements in $\mathcal{O}_K$ based on the counting of these zero-sum subsequences.
\end{abstract}

\section{Preliminaries}

Let $K$ be a number field and $\mathcal{O}_K$ its integers ring with ideal class group $G = Cl(K)$ of order $h$.  If $I$ is an ideal of $\mathcal{O}_K$, we write $[I]$ to represent the class of $I$ in $Cl(K)$.  
In $\mathcal{O}_K$, any ideal $I\not= 0$ factors uniquely as a product of prime ideals $I = \mathfrak{p}_1^{e_1} \cdots \mathfrak{p}_g^{e_g}$ (for a detailed development of ideal factorization, see \cite{Conrad}), and its principality depends exclusively on the image of these factors in the class group. To approach the problem of counting principal ideal divisors from an analytical perspective, it is necessary to specify the algebraic framework and the group theory tools that we will use.

\subsection{The Class Group and Zero-Sum Sequences}

The study of zero-sum sequences has been a central topic in additive combinatorics (see the survey by Gao and Geroldinger \cite{GG06})

A \textbf{zero-sum sequence} in a finite abelian  group $G$ is an element $S = (g_1, g_2, \dots, g_n)$, with $g_i \in G$, such that:
\begin{equation}
    \sum_{i=1}^{n} g_i = 0_G
\end{equation}
where $0_G$ is the identity element of $G$ in additive notation.

In the context of $\mathcal{O}_K$, if an ideal $(\alpha)$ has a factorization $(\alpha) = \mathfrak{p}_1^{e_1} \dots \mathfrak{p}_g^{e_g}$, the sequence formed by the classes of its prime factors $[\mathfrak{p}_i]$ (repeated according to their multiplicity $e_i$) is, by definition, a zero-sum sequence in $Cl(K)$.

Let $R$ be a Dedekind domain and $I \subseteq R$ be a non-zero ideal with prime ideal factorization $I = \prod_{i=1}^{g} \mathfrak{p}_i^{e_i}$. An \textbf{ideal divisor} of $I$ is any ideal $J$ of the form $J = \prod_{i=1}^{g} \mathfrak{p}_i^{f_i}$, with $0 \le f_i \le e_i$. We say that:
\begin{enumerate}
    \item $J$ is a \textbf{principal ideal divisor} if $J = (a)$ for some $a \in R$.
    \item $(b)$ is a \textbf{maximal principal ideal divisor} of $I$ if it is principal and there is no principal ideal divisor $J$ of $I$ such that $(b) \subsetneq J \subseteq I$.
    \item $(\ell)$ is a \textbf{common principal ideal divisor} of $(\alpha)$ and $(\beta)$ if $(\alpha)\subseteq(\ell)$ and $(\beta)\subseteq(\ell)$.
    \item $(\ell)$ is a \textbf{maximal common principal ideal divisor} of $(\alpha)$ and $(\beta)$ if for any other common principal ideal divisor $(\gamma)$ it holds that $(\gamma) \mid (\ell)$.
\end{enumerate}

\subsection{Character Theory and Davenport Constant}

The analytical study of zero-sum sequences is facilitated by using the dual group of $G$, denoted by $\hat{G}$:

\[ \hat{G}:=\{ \chi:G \to \mathbb{C}^{*}\mid \chi \ \text{is a group homomorphism}\}. \]

Since $G$ is a finite abelian group, it is well known that $G \cong \hat{G}$, implying $|\hat{G}| = |G| = h$. The fundamental tool for detecting elements that satisfy the zero-sum condition is the \textbf{orthogonality relation} of the characters, which allows us to "filter" for the identity element within a sum:

\begin{lemma}
Let $G$ be a finite abelian group and $x \in G$. Then:
\begin{equation}
    \frac{1}{h} \sum_{\chi \in \hat{G}} \chi(x) = 
    \begin{cases} 
    1 & \text{if } x = 0_G \\
    0 & \text{if } x \neq 0_G 
    \end{cases}
\end{equation}
where $h = |G|$.
\end{lemma}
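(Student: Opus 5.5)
The plan is the standard two-case argument.

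\emph{Case $x = 0_G$.} Every homomorphism $\chi$ sends $0_G$ to $1$. The sum therefore equals $|\hat G|$. Since the paper has recorded that $|\hat G| = |G| = h$, the left-hand side is $h/h = 1$.

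\emph{Case $x \neq 0_G$.} First I will use a translation trick. Suppose there is some $\psi \in \hat G$ with $\psi(x) \neq 1$. The map $\chi \mapsto \psi\chi$ is a bijection of $\hat G$, since $\hat G$ is a group under pointwise multiplication. Put $S = \sum_{\chi} \chi(x)$. Then
\[
\psi(x) S = \sum_{\chi\in\hat G} (\psi\chi)(x) = S,
\]
so $(\psi(x)-1)S = 0$. Because $\psi(x) \neq 1$, this forces $S = 0$.

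\emph{Main obstacle: existence of $\psi$.} The real content is producing, for $x \neq 0_G$, a character $\psi$ with $\psi(x) \neq 1$. I would use the structure theorem to write $G \cong \mathbb{Z}/n_1 \oplus \cdots \oplus \mathbb{Z}/n_r$ and $x = (a_1,\dots,a_r)$. Since $x \neq 0$, some coordinate $a_j \not\equiv 0 \pmod{n_j}$. Define
\[
\psi(y_1,\dots,y_r) = e^{2\pi i y_j/n_j}.
\]
This is well defined on $\mathbb{Z}/n_j$, it is a homomorphism, and $\psi(x) = e^{2\pi i a_j/n_j} \neq 1$.

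\emph{Alternative route to $\psi$.} One could avoid coordinates by extending characters from the cyclic subgroup $\langle x\rangle$. On $\langle x\rangle$ of order $m>1$, the character $x \mapsto e^{2\pi i/m}$ is nontrivial. It extends to $G$ because $\mathbb{C}^*$ is divisible, hence an injective $\mathbb{Z}$-module. The coordinate construction is more elementary, so I would present that one.
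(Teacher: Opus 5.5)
Your proof is correct, but it follows a different route from the source the paper relies on. The paper gives no argument of its own and only cites Theorem 6.13 of Apostol.

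\textbf{Apostol's route.} He first proves orthogonality over group elements: $\sum_{a\in G}\chi(a)=0$ for nontrivial $\chi$. This uses the translation trick in the \emph{group} variable, and it needs no existence statement, since a nontrivial $\chi$ is $\neq 1$ somewhere by definition. He then uses $|\hat G|=|G|$, which he establishes beforehand by extending characters from subgroups. This makes the character table $A=(\chi_i(a_j))$ a square matrix, so $AA^*=hI$ forces $A^*A=hI$. The entries of $A^*A$ are $\sum_\chi \overline{\chi(a)}\chi(b)$, which is the lemma with $x=b-a$.

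\textbf{Your route.} You translate in the \emph{character} variable instead. That puts the whole burden on producing $\psi$ with $\psi(x)\neq 1$. You handle this cleanly with the structure theorem; the alternative via injectivity of $\mathbb{C}^*$ also works.

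\textbf{What each buys.} Your argument is shorter and isolates the real content, namely that characters separate points. Its $x\neq 0_G$ half does not even use $|\hat G|=h$. Apostol's argument avoids the structure theorem and any explicit separating character, at the cost of the character count and a linear-algebra step.

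\textbf{A small improvement.} The decomposition $G\cong\bigoplus_j\mathbb{Z}/n_j$ that you already invoke also gives $|\hat G|=\prod_j n_j=h$ directly. A character is just a choice of an $n_j$-th root of unity for each generator. So your argument can be made fully self-contained, rather than leaning on the paper's ``well known'' remark in the case $x=0_G$.
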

\begin{proof}
    See Theorem 6.13 of \cite{Ap}.
\end{proof}

\medskip
Using these character-theoretic tools, one can study the combinatorial invariants of $G$. A central object in this study is the \textbf{Davenport constant}. Using these character-theoretic tools, one can study the combinatorial invariants of $G$. A central object in this study is the \textbf{Davenport constant}. The study of this constant was proposed by Professor Harold Davenport in 1966 in connection with the study of factorization problems in integer rings in number fields. This leads to the involvement of zero-sum problems \cite{olson}.

Let $G$ be an additive abelian group. The Davenport constant $\mathcal{D}(G)$ is defined as the smallest integer $k \in \mathbb{N}$ such that every sequence $S$ over $G$ of length $|S| \ge k$ has a non-empty zero-sum subsequence. In the specific case of cyclic groups, the Davenport constant coincides exactly with the order of the group, a result that simplifies many calculations in the context of class groups (see Lemma 1.4.9 of \cite{GH06}).

\medskip
While the Davenport constant provides a combinatorial upper bound for factorizations, its relevance stems from the arithmetic of the integers ring $\mathcal{O}_K$. Specifically, the failure of an ideal to be principal is what necessitates these combinatorial methods. The following proposition provides a practical criterion for identifying such non-principal ideals by examining their norms.

\begin{proposition}\label{princi}
Let $\mathcal{O}_K$ be the integers ring of a number field $K$, and let $\mathfrak{p} \subset \mathcal{O}_K$ be a prime ideal. If there exists no element $\alpha \in \mathcal{O}_K$ such that $|N_{K/\mathbb{Q}}(\alpha)| = N(\mathfrak{p})$, then $\mathfrak{p}$ is not a principal ideal.
\end{proposition}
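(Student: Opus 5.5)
The plan is to prove the contrapositive: assuming $\mathfrak{p}$ is principal, I will exhibit an element $\alpha \in \mathcal{O}_K$ with $|N_{K/\mathbb{Q}}(\alpha)| = N(\mathfrak{p})$. So suppose $\mathfrak{p} = (\alpha)$ for some $\alpha \in \mathcal{O}_K$, where $\alpha \neq 0$ because $\mathfrak{p}$ is a nonzero prime ideal.

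The single ingredient I need is the standard compatibility between the absolute norm of ideals and the field norm of elements. For every nonzero $\alpha \in \mathcal{O}_K$,
\[
N\bigl((\alpha)\bigr) = [\mathcal{O}_K : \alpha\mathcal{O}_K] = |N_{K/\mathbb{Q}}(\alpha)|.
\]
I will cite this directly, for instance from \cite{Conrad}. If a self-contained justification is wanted, I will sketch it as follows. Choose a $\mathbb{Z}$-basis $\omega_1,\dots,\omega_n$ of $\mathcal{O}_K$. Multiplication by $\alpha$ is then a $\mathbb{Z}$-linear endomorphism of $\mathcal{O}_K \cong \mathbb{Z}^n$, given by some integer matrix $M_\alpha$. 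By the theory of finitely generated abelian groups (Smith normal form), the index of the image $\alpha\mathcal{O}_K$ equals $|\det M_\alpha|$. Finally, $\det M_\alpha$ is by definition $N_{K/\mathbb{Q}}(\alpha)$, since it is the determinant of multiplication by $\alpha$ on the $\mathbb{Q}$-vector space $K$.

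With this identity, the conclusion is immediate: $N(\mathfrak{p}) = N((\alpha)) = |N_{K/\mathbb{Q}}(\alpha)|$, so an element of the required norm exists. This contradicts the hypothesis, and therefore $\mathfrak{p}$ is not principal. Primality of $\mathfrak{p}$ plays no real role in the argument; the same reasoning shows that any nonzero ideal $I$ for which no element has norm $\pm N(I)$ is non-principal.

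The only step carrying any content is the norm identity $N((\alpha)) = |N_{K/\mathbb{Q}}(\alpha)|$, specifically matching the index of a full-rank sublattice with the absolute value of the determinant. That step is textbook material, so I expect no genuine obstacle. I will invoke it as a standard fact and include the determinant sketch only briefly.
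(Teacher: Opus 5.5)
Your proposal is correct and follows the paper's argument: assume $\mathfrak{p}=(\alpha)$ and use $N(\mathfrak{p}) = N((\alpha)) = |N_{K/\mathbb{Q}}(\alpha)|$ to contradict the hypothesis. Your determinant sketch of that norm identity is sound, and so is your remark that primality of $\mathfrak{p}$ plays no role.
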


\begin{proof}
Suppose that $\mathfrak{p}$ is a principal ideal. Then there exists some element $\alpha \in \mathcal{O}_K$ such that $\mathfrak{p} = (\alpha)$. By the properties of the numerical norm of an ideal, we have:
\[
N(\mathfrak{p}) = N((\alpha)) = |N_{K/\mathbb{Q}}(\alpha)|
\]
However, this contradicts the hypothesis that no such $\alpha$ exists in $\mathcal{O}_K$. Therefore, $\mathfrak{p}$ must be non-principal.
\end{proof}

\section{Structure of Ideals}

We now establish the structural result that motivates the use of combinatorial tools. The following theorem demonstrates that, if $Cl(K)$ is a cyclic group, every ideal can be reduced to a principal part and a residue whose length is strictly bounded by the Davenport constant of the class group, ensuring that said residue contains no proper principal divisors.

\begin{theorem}\label{teoprinc}
Let $K$ be a number field, $\mathcal{O}_K$ its integers ring and $I \subseteq \mathcal{O}_K$ an ideal. Suppose that $Cl(K)$ is cyclic of order $h$. Then there exist $\gamma \in \mathcal{O}_K$, distinct ideals $T_1, \dots, T_{m}$ and $c_1,\ldots,c_m\in\mathbb{N}$ such that $c_1+\ldots+c_m\leq h-1$ satisfying
\[ I = (\gamma) T_1^{c_1} \cdots T_{m}^{c_m}, \]
where for each $i$, $T_i$ is a non-principal prime ideal or $T_i=\mathcal{O}_K$.
\end{theorem}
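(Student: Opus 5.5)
We argue by strong induction on the number of prime factors (counted with multiplicity) in the factorization of $I$, repeatedly extracting principal ideal divisors until what is left is too short to contain one. The case $I=0$ is excluded, or handled by $\gamma=0$. Write $I=\mathfrak{p}_1\cdots\mathfrak{p}_n$ with the primes listed with repetition, and consider the sequence $S=([\mathfrak{p}_1],\dots,[\mathfrak{p}_n])$ over $G=Cl(K)$.

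First we separate off the principal primes. Every $\mathfrak{p}_i$ with $[\mathfrak{p}_i]=0_G$ is principal, and we absorb it into $(\gamma)$. So we may assume every remaining prime is non-principal; call their product $I'$ and its class sequence $S'$.

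Next comes the reduction step. If $|S'|\ge h$, then since $G$ is cyclic, $\mathcal{D}(G)=h$ by Lemma 1.4.9 of \cite{GH06}. Hence $S'$ has a non-empty zero-sum subsequence, say indexed by $J\subseteq\{1,\dots,n\}$. Then $\prod_{j\in J}\mathfrak{p}_j$ has trivial class, so it equals $(\delta)$ for some $\delta\in\mathcal{O}_K$. Write $I'=(\delta)I''$, where $I''$ has strictly fewer prime factors. By induction, $I''=(\gamma'')T_1^{c_1}\cdots T_m^{c_m}$ with $\sum c_i\le h-1$. Combining gives $I=(\gamma_0\delta\gamma'')T_1^{c_1}\cdots T_m^{c_m}$, where $(\gamma_0)$ is the product of the principal primes removed in the previous step.

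In the base case $|S'|\le h-1$ we stop. We group equal primes of $I'$, so $I'=T_1^{c_1}\cdots T_m^{c_m}$ with the $T_i$ distinct non-principal primes and $\sum c_i=|S'|\le h-1$. If no prime remains, we take $m=1$, $T_1=\mathcal{O}_K$ and $c_1=1$ (when $h\ge2$). When $h=1$ we instead use the empty product, or read the convention $c_1\cdot 0$ with $\mathcal{O}_K$. This degenerate bookkeeping, with $\mathbb{N}$ possibly starting at $1$ and $h=1$, is the only delicate point, and the statement's option $T_i=\mathcal{O}_K$ exists precisely to cover it.

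The main conceptual step is the application of the Davenport constant $\mathcal{D}(G)=h$ for cyclic $G$: it guarantees a principal subproduct whenever at least $h$ non-principal primes remain. The rest is bookkeeping. One should also note that the induction terminates, since the number of prime factors strictly decreases at each extraction.
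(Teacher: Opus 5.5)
Your proof is correct, but it is organized differently from the paper's.

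The paper writes $I=(\alpha,\beta)$ and uses that $I=(\alpha)+(\beta)$ is the gcd of $(\alpha)$ and $(\beta)$. It then extracts, in one step, a maximal common principal divisor $(\gamma)$ of $(\alpha)$ and $(\beta)$. The cofactor $(a,b)$ then has no non-trivial principal divisor, and $\mathcal{D}(Cl(K))=h$ is applied contrapositively to bound the length of this zero-sum-free residue by $h-1$. You instead work directly with the prime factorization of $I$ and peel off principal subproducts by strong induction until fewer than $h$ non-principal primes remain.

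Your route is more elementary and self-contained. It needs neither the two-generator property of Dedekind domains nor the (implicit) finiteness argument that a maximal common principal divisor exists. It also treats principal prime factors explicitly, which the paper leaves to be silently absorbed into $(\gamma)$.

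What the paper's route buys is a stronger conclusion than the statement records. Because $(\gamma)$ is maximal, the residue $T_1^{c_1}\cdots T_m^{c_m}$ has no non-trivial principal divisor at all. This is the property announced before the theorem and used in the remark after it. Your length-based stopping rule does not guarantee this: for $h\ge 3$, a residue of length at most $h-1$ can still contain primes $\mathfrak{p},\mathfrak{q}$ with $[\mathfrak{q}]=-[\mathfrak{p}]$, so that $\mathfrak{p}\mathfrak{q}$ is principal. If you want the stronger form, keep extracting as long as $S'$ has a non-empty zero-sum subsequence. The bound $\sum c_i\le h-1$ then follows from $\mathcal{D}(Cl(K))=h$ exactly as in the paper.
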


\begin{proof}
   If $I=\mathcal{O}_K$ or $I=\{0\}$, then the result is trivial. Suppose that $Cl(K) \cong \mathbb{Z}/h\mathbb{Z}$, and $I=(\alpha,\beta)$ is a non-zero proper ideal of $\mathcal{O}_K$. Consider the following prime ideal factorizations:
\[ (\alpha) = P_1^{a_1} \cdots P_g^{a_g}, \]
\[ (\beta) = Q_1^{b_1} \cdots Q_r^{b_r}. \]
Suppose that $(\alpha)$ and $(\beta)$ have no common principal ideal divisor other than the trivial one. Since $I\neq \mathcal{O}_K$, there exist $1\leq i\leq g$ and $1\leq j\leq r$ such that $P_i=Q_j$. Without loss of generality, assume such equalities occur in the first $m$ factors, i.e., $P_i=Q_i$ for $i=1,\ldots, m$. For each $1\leq i\leq m$, we let $T_i=P_i=Q_i$ and $c_i=\min\{a_i,b_i\}$. Then

\begin{equation*}
\begin{split}
I &= (\alpha,\beta) = (\alpha) + (\beta) \\
  &= T_1^{c_1} \cdots T_m^{c_m} (P_1^{\alpha_1} \cdots P_m^{\alpha_m} P_{m+1}^{a_{m+1}} \cdots P_g^{a_g} + Q_1^{\beta_1} \cdots Q_m^{\beta_m} Q_{m+1}^{b_{m+1}} \cdots Q_r^{b_r}),
\end{split}
\end{equation*}
where each $T_i$ is non-principal, $T_i\neq T_j$ if $i\neq j$ and $c_i+\alpha_i=a_i$, $c_i+\beta_i=b_i$ for $i=1,\ldots, m$. Since there are no more common factors in the sum, we have

\[ I=(1)T_1^{c_1}\cdots T_m^{c_m}. \] 
If $c_1+\ldots+c_m\geq h$, then $[T_1]^{c_1}\cdots [T_m]^{c_m}$ is a sequence of $h$ or more elements in $Cl(K)$. Given that $\mathcal{D}(Cl(K))=h$, there exist $i_1,\ldots,i_s$ such that $[T_{i_1}]\cdots[T_{i_s}]=1$, which is not possible because there are no non-trivial principal ideal divisors. Therefore $c_1+\ldots+c_m\leq h-1$. Suppose there exists a non-trivial common principal ideal divisor $(\gamma)$ of $(\alpha)$ and $(\beta)$, and we choose $(\gamma)$ to be maximal. Then 

\[ I=(\alpha,\beta)=(\gamma)(a,b), \]
where $(\gamma a)=(\alpha)$, $(\gamma b)=(\beta)$ and the ideals $(a)$, $(b)$ have no common principal ideal divisors by the maximality of $(\gamma)$. From the previous case, we have 

\[ (a,b)=(1)T_1^{c_1}\cdots T_m^{c_m}, \]
where $c_1+\ldots+c_m\leq h-1$ and $T_i$ is a non-principal prime ideal for all $i$. Therefore 

\[ I=(\gamma)T_1^{c_1}\cdots T_m^{c_m}. \]
\end{proof}

\begin{remark}
    This result is true for Dedekind domains with finite cyclic class group. 
\end{remark}

\begin{remark}
The proof of Theorem \ref{teoprinc} clarifies that principality in Dedekind domains with a finite cyclic class group is a finite and bounded phenomenon. Once the maximal principal divisor $(\gamma)$ is extracted, counting the principal ideal divisors of the ideal $T_1^{c_1} \cdots T_m^{c_m}$ is trivial (only the trivial one and itself exist if the total sum were zero). Furthermore, the maximal principal divisor is not unique.
\end{remark}

\begin{remark}
    If $I=(\alpha,\beta)=(\gamma)T_1^{c_1}\cdots T_m^{c_m}$, then $\gamma\mid \alpha$, $\gamma\mid \beta$. Furthermore, if $\gamma\in I$, then $I=(\gamma)$.
\end{remark}

\begin{example}
We take the field $K=\mathbb{Q}(\sqrt{10})$ and consider the ideal 

\[ I=(18,10+\sqrt{10})=(2,\sqrt{10})(3,1+\sqrt{10})^2, \]
we have two maximal principal ideal divisors: 

\[ (-2+\sqrt{10})=(2,\sqrt{10})(3,1+\sqrt{10}) \quad \text{and} \quad (1+\sqrt{10})=(3,1+\sqrt{10})^2. \]    
\end{example}
\noindent Moreover, maximal principal ideal divisors do not necessarily have the same number of prime factors.

\begin{example}
    Consider the real quadratic field $K = \mathbb{Q}(\sqrt{195})$. The discriminant is $780 =2^2\cdot 3 \cdot 5 \cdot 13$, and $\mathcal{O}_K = \mathbb{Z}[\sqrt{195}]$. The class group of $K$ is isomorphic to the Klein group $C_2 \times C_2$.
Consider the following prime ideals:
\[
\mathfrak{p}_3 = (3, \sqrt{195}), \quad \mathfrak{p}_5 = (5, \sqrt{195}), \quad \mathfrak{p}_{13} = (13, \sqrt{195}).
\]
In $Cl(K)$, these ideals satisfy the following relations:
\begin{enumerate}
    \item $[\mathfrak{p}_i] \neq 1$ for $i=3,5,13$.
    \item $[\mathfrak{p}_i]^2 = 1$ for $i=3,5,13$.
    \item $[\mathfrak{p}_3][\mathfrak{p}_5][\mathfrak{p}_{13}] = 1$, which implies that the product $\mathfrak{p}_3 \mathfrak{p}_5 \mathfrak{p}_{13} = (\sqrt{195})$ is principal.
    \item If $i\neq j$, then $[\mathfrak{p}_i][\mathfrak{p}_j]\neq 1$.
\end{enumerate}

Let $I = \mathfrak{p}_3^2 \cdot \mathfrak{p}_5^2 \cdot \mathfrak{p}_{13}$.
We will construct two maximal principal ideal divisors of $I$ with different lengths. Let $J_1=\mathfrak{p}_3^2\cdot\mathfrak{p}_5^2=(3)(5)=(15)$ and $J_2=\mathfrak{p}_3\mathfrak{p}_5\mathfrak{p}_{13}=(\sqrt{195})$. Since $\mathfrak{p}_{13}$ is not principal, $J_1$ is a maximal principal ideal divisor of $I$. Using the previous argument, $J_2$ is also a maximal principal ideal divisor of $I$. 
\end{example}

\begin{example}\label{ej1}
    
According to the LMFDB database \cite{LMFDB}, the ideal class group of the field $K=\mathbb{Q}(\sqrt{219})$ is cyclic of order $4$. Since $N(3+\sqrt{219})=2\cdot 3\cdot 5\cdot 7$, using Dedekind's Theorem, the ideal $(3+\sqrt{219})$ has the factorization 

\[ (3+\sqrt{219})=(2, 1+\sqrt{219}) (3, \sqrt{219})(5, 3+\sqrt{219})(7, 3+\sqrt{219}), \]
where each factor is a non-principal prime ideal by Proposition \ref{princi}. We will compute all possible principal ideal divisors of $(3+\sqrt{219})$. Each factor provides a non-principal ideal divisor. The ideal divisors with two factors are:
\begin{itemize}
    \item $(2,1+\sqrt{219})(3,\sqrt{219})=(-15+\sqrt{219})$,

    \item $(2,1+\sqrt{219})(5,3+\sqrt{219})=(10,3+\sqrt{219})$,

    \item $(2,1+\sqrt{219})(7,3+\sqrt{219})=(14,3+\sqrt{219})$,

    \item $(3,\sqrt{219})(5,3+\sqrt{219})=(15,3+\sqrt{219})$,

    \item $(3,\sqrt{219})(7,3+\sqrt{219})=(21,3+\sqrt{219})$,

    \item $(5,3+\sqrt{219})(7,3+\sqrt{219})=(-29+2\sqrt{219})$.
\end{itemize}

The ideal divisors with three factors are:

\begin{itemize}
    \item $(2,1+\sqrt{219})(3,\sqrt{219})(5,3+\sqrt{219})=(30,3+\sqrt{219})$,

    \item $(2,1+\sqrt{219})(3,\sqrt{219})(7,3+\sqrt{219})=(42,3+\sqrt{219})$,

    \item $(2,1+\sqrt{219})(5,3+\sqrt{219})(7,3+\sqrt{219})=(70,3+\sqrt{219})$,

    \item $(3,\sqrt{219})(5,3+\sqrt{219})(7,3+\sqrt{219})=(105,3+\sqrt{219})$.
\end{itemize}

There is only one ideal divisor with four and zero factors, which produce trivial principal ideal divisors. Therefore, the principal ideal divisors are:

\begin{itemize}

    \item $\mathcal{O}_K=(1)$,

    \item $(2,1+\sqrt{219})(3,\sqrt{219})=(-15+\sqrt{219})$,

    \item $(5,3+\sqrt{219})(7,3+\sqrt{219})=(-29+2\sqrt{219})$,

    \item $(2,1+\sqrt{219})(3,\sqrt{219})(5,3+\sqrt{219})(7,3+\sqrt{219})=(3+\sqrt{219})$.
    
\end{itemize}

From the following equality 

\[ 3+\sqrt{219}=(-15+\sqrt{219})(-44-3\sqrt{219})=(-29+2\sqrt{219})(15+\sqrt{219}). \]

\noindent The principal ideal divisors have produced divisors of $3+\sqrt{219}$. 

\end{example}
This motivates the need for a general formula to count how many divisors an element has, such as the one we will present in Section 3, for cases where the ideals have not been reduced to this canonical form.

\section{Main Result: Counting Zero-Sum Subsequences}

In this section, we find a formula to count the number of zero-sum subsequences of a given sequence, which, under the established correspondence, is equivalent to counting the number of principal ideal divisors of an element $\alpha \in \mathcal{O}_K$.

\begin{theorem}\label{teo2}[Subsequence Counting Formula]
Let $G$ be a finite abelian group of order $h$ and let $S = (r_1^{e_1}, r_2^{e_2}, \dots, r_g^{e_g})$ be a sequence in $G$, where each $r_i$ is an element of $G$ with multiplicity $e_i$. The number $n(S)$ of zero-sum subsequences of $S$ is given by:
\begin{equation}
    n(S) = \frac{1}{h} \sum_{\chi \in \hat{G}} \prod_{i=1}^{g} \left( \sum_{k=0}^{e_i} \chi(r_i)^k \right)
\end{equation}
where $\hat{G}$ is the dual group of characters of $G$.
\end{theorem}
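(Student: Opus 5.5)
The plan is to interpret ``subsequence'' in the multiset sense that matches ideal divisors: a subsequence of $S$ is determined by a tuple $(k_1,\dots,k_g)$ with $0\le k_i\le e_i$, namely it takes $r_i$ with multiplicity $k_i$. This corresponds exactly to the ideal divisor $\prod \mathfrak{p}_i^{k_i}$ of $\prod\mathfrak{p}_i^{e_i}$ when $r_i=[\mathfrak{p}_i]$. The empty subsequence, $k_i=0$ for all $i$, is counted, matching the trivial divisor $(1)$ in Example \ref{ej1}. With this convention,
\[
n(S)=\sum_{k_1=0}^{e_1}\cdots\sum_{k_g=0}^{e_g}\mathbf{1}\Big[\textstyle\sum_{i=1}^g k_i r_i=0_G\Big].
\]
I will also assume the $r_i$ are distinct. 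The formula still holds without this assumption, provided subsequences are still indexed by the tuples $(k_i)$.

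The first step replaces the indicator using the orthogonality Lemma with $x=\sum_i k_i r_i$:
\[
n(S)=\sum_{(k_i)}\frac1h\sum_{\chi\in\hat G}\chi\Big(\sum_i k_ir_i\Big).
\]
Both sums are finite, so I interchange them and pull out $\frac1h$. Since $\chi$ is a homomorphism from additive $G$ to multiplicative $\mathbb{C}^*$, we have $\chi(\sum_i k_ir_i)=\prod_i\chi(r_i)^{k_i}$.

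The second step handles the inner sum $\sum_{(k_i)}\prod_i \chi(r_i)^{k_i}$ over the box $\prod_i\{0,\dots,e_i\}$. This is a sum over a Cartesian product of a product of functions, each depending on a single coordinate. It therefore factors by distributivity as $\prod_{i=1}^g\sum_{k=0}^{e_i}\chi(r_i)^k$, which yields the stated formula.

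There is no real analytic obstacle. The main point needing care is the first step: making precise that the subsequences being counted are exactly the tuples $(k_i)$, with the empty one included, rather than sub-multisets chosen by position. Counting by position would instead give binomial weights $\binom{e_i}{k}$. I will state this convention explicitly and note that it is the one under which $n(S)$ counts principal ideal divisors.
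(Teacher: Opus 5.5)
Your proposal is correct and follows the paper's proof essentially step for step. You index subsequences by exponent tuples $(f_1,\dots,f_g)$ with $0\le f_i\le e_i$, replace the zero-sum indicator by the orthogonality relation, interchange the finite sums, use $\chi(\sum_i f_i r_i)=\prod_i\chi(r_i)^{f_i}$, and factor the sum over the box into a product of geometric sums; your explicit remarks that the empty subsequence is counted and that the $r_i$ need not be distinct (as when distinct primes share a class) make precise a convention the paper leaves implicit.
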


\begin{proof}
Consider the set of all possible subsequences of $S$, which can be represented by tuples of exponents $(f_1, f_2, \dots, f_g)$ such that $0 \le f_i \le e_i$. A subsequence is zero-sum if and only if:
\begin{equation}
    \sum_{i=1}^{g} f_i r_i = 0_G
\end{equation}
We define $n(S)$ as the sum over all possible tuples, using the indicator function of the character orthogonality relation:
\begin{equation}
    n(S) = \sum_{f_1=0}^{e_1} \dots \sum_{f_g=0}^{e_g} \left( \frac{1}{h} \sum_{\chi \in \hat{G}} \chi \left( \sum_{i=1}^{g} f_i r_i \right) \right)
\end{equation}
By the multiplicative property of characters ($\chi(a+b) = \chi(a)\chi(b)$) and rearranging the summations:
\begin{equation}
    n(S) = \frac{1}{h} \sum_{\chi \in \hat{G}} \sum_{f_1=0}^{e_1} \dots \sum_{f_g=0}^{e_g} \prod_{i=1}^{g} \chi(r_i)^{f_i}
\end{equation}
Factoring the product of the independent sums for each component $i$:
\begin{equation}
    n(S) = \frac{1}{h} \sum_{\chi \in \hat{G}} \prod_{i=1}^{g} \left( \sum_{f_i=0}^{e_i} \chi(r_i)^{f_i} \right)
\end{equation}
Each term within the product is a finite geometric series in $\mathbb{C}$. This completes the proof.
\end{proof}

\begin{corollary}
Let $\alpha \in \mathcal{O}_K$ and $(\alpha) = \mathfrak{p}_1^{e_1} \cdots \mathfrak{p}_g^{e_g}$ be the prime ideal factorization. The number of principal ideal divisors of $(\alpha)$, denoted by $n(\alpha)$, is given by
\begin{equation}
    n(\alpha) = \frac{1}{h} \sum_{\chi \in \hat{G}} \prod_{i=1}^{g}\left( \sum_{k=0}^{e_i} \chi([\mathfrak{p}_i])^k \right)
\end{equation}
where $h=\mid Cl(K)\mid$.
\end{corollary}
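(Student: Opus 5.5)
The plan is to reduce the corollary to Theorem \ref{teo2} through an explicit bijection between principal ideal divisors of $(\alpha)$ and zero-sum subsequences of the sequence of prime classes. The combinatorial count is already done in Theorem \ref{teo2}, so the only real work is in setting up this correspondence correctly.

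First I would fix the setup. Take the factorization $(\alpha)=\mathfrak{p}_1^{e_1}\cdots\mathfrak{p}_g^{e_g}$ with the $\mathfrak{p}_i$ pairwise distinct. Form the sequence $S=([\mathfrak{p}_1]^{e_1},\dots,[\mathfrak{p}_g]^{e_g})$ in $G=Cl(K)$, written additively.

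The key convention, matching the proof of Theorem \ref{teo2}, is that a subsequence means an exponent tuple $(f_1,\dots,f_g)$ with $0\le f_i\le e_i$. Here $f_i$ is attached to the index $i$, that is, to the prime $\mathfrak{p}_i$, and not to the group element $[\mathfrak{p}_i]$. This matters because distinct primes may have the same class. The formula in Theorem \ref{teo2} tolerates $r_i=r_j$ for $i\ne j$: its derivation never used that the $r_i$ are distinct, only that subsequences are indexed by such tuples. So I would apply it with $r_i=[\mathfrak{p}_i]$ even when these repeat, and I would say this explicitly.

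Next I would build the bijection. By unique factorization of ideals in the Dedekind domain $\mathcal{O}_K$, the ideal divisors $J$ of $(\alpha)$ correspond bijectively to tuples $(f_1,\dots,f_g)$ with $0\le f_i\le e_i$, via $J=\prod\mathfrak{p}_i^{f_i}$. Since $[\cdot]$ is a homomorphism from the ideal group to $Cl(K)$, we have $[J]=\sum_i f_i[\mathfrak{p}_i]$. Moreover, $J$ is principal if and only if $[J]=0_G$, by the definition of the class group as the quotient of fractional ideals by principal ones. Hence the principal ideal divisors are exactly the tuples whose associated subsequence is zero-sum, so $n(\alpha)=n(S)$.

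Finally I would substitute into the formula of Theorem \ref{teo2} with $h=|Cl(K)|$, which gives the stated expression. Two degenerate cases need a check. The empty tuple gives $J=(1)$, and it is counted on both sides. If $\alpha$ is a unit then $g=0$, the product is empty, and the formula gives $\frac1h\cdot h=1$, which is correct.

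The main obstacle is the multiplicity issue above: making sure the count is over ideal divisors and not over distinct multisets of classes. I would handle it by rereading the proof of Theorem \ref{teo2} to confirm that nothing in it requires the $r_i$ to be distinct.
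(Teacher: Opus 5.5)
Your proposal is correct and follows the paper's route: the paper obtains the corollary by applying Theorem~\ref{teo2} to the sequence $([\mathfrak{p}_1]^{e_1},\dots,[\mathfrak{p}_g]^{e_g})$ in $Cl(K)$, since ideal divisors of $(\alpha)$ correspond to exponent tuples and are principal exactly when their class is trivial. Your explicit remark that the count is over tuples indexed by the primes makes precise a point the paper leaves implicit; repeated classes $[\mathfrak{p}_i]=[\mathfrak{p}_j]$ cause no trouble because the proof of Theorem~\ref{teo2} never uses distinctness of the $r_i$.
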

Theorem \ref{teo2} higlights by its appearence in classical results in elementary number theory. The counting formula $n(\alpha)$ is a natural generalization of the classical divisor function $d_0(n)$ from elementary number theory.

\begin{corollary}[Generalization of the Classical Divisor Function]
If $\mathcal{O}_K$ is a Principal Ideal Domain, the formula $n(\alpha)$ reduces to the standard counting of divisors in a Unique Factorization Domain.
\end{corollary}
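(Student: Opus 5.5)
The plan is to specialize the formula for $n(\alpha)$ from the preceding Corollary to the case where $\mathcal{O}_K$ is a principal ideal domain. There the class group is trivial: $h=|Cl(K)|=1$, and $\hat{G}$ consists only of the trivial character $\chi_0$, with $\chi_0(x)=1$ for every $x\in G$. In particular $\chi_0([\mathfrak{p}_i])=1$ for each prime factor $\mathfrak{p}_i$ of $(\alpha)$. Substituting into the formula, the outer average collapses to a single term and each inner geometric sum becomes $\sum_{k=0}^{e_i}1=e_i+1$. This gives
\begin{equation*}
n(\alpha)=\prod_{i=1}^{g}(e_i+1).
\end{equation*}

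The second step is to identify this product with the classical divisor count in a UFD. Since $\mathcal{O}_K$ is a PID, every prime ideal $\mathfrak{p}_i$ is generated by a prime element $\pi_i$. Hence $\alpha=u\,\pi_1^{e_1}\cdots\pi_g^{e_g}$ for some unit $u$, and this is the factorization of $\alpha$ into primes, unique up to units and order.

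Every ideal divisor $\prod\mathfrak{p}_i^{f_i}$ with $0\le f_i\le e_i$ is automatically principal, namely $(\pi_1^{f_1}\cdots\pi_g^{f_g})$. Two principal ideals coincide exactly when their generators are associates. So the principal ideal divisors of $(\alpha)$ correspond bijectively to the divisors of $\alpha$ up to associates. By unique factorization there are $\prod(e_i+1)$ of these, which is exactly the standard count $d_0$. For $K=\mathbb{Q}$ and $\alpha=n>0$, taking positive representatives recovers $d_0(n)=\prod(e_i+1)$.

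The only point needing care is the convention "up to units": the formula counts ideals, not elements. I will state explicitly that the classical $d_0$ counts divisors modulo associates, which for $\mathbb{Z}$ means counting positive divisors. With that convention in place, the remaining steps are direct substitution.
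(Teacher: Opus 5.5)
Your proposal is correct and follows the paper's proof: both substitute the single trivial character $\chi_0$ (with $h=1$) into the formula to obtain $n(\alpha)=\prod_{i=1}^{g}(e_i+1)$. Your extra step, matching principal ideal divisors with divisors of $\alpha$ up to associates, makes explicit the identification that the paper only asserts as the well-known UFD divisor count.
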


\begin{proof}
If $\mathcal{O}_K$ is a PID, then $\hat{G}=\{\chi_0\}$, where $\chi_0(g) = 1$ for all $g \in G$. Substituting these values into the main formula, we have:
\begin{equation*}
    n(\alpha) =  \sum_{\chi \in \{\chi_0\}} \prod_{i=1}^{g} \left( \sum_{k=0}^{e_i} \chi_0([\mathfrak{p}_i])^k \right) = \prod_{i=1}^{g} \left( \sum_{k=0}^{e_i} 1 \right).
\end{equation*}
Since each inner sum contains exactly $e_i + 1$ terms, the expression simplifies to:
\begin{equation*}
    n(\alpha) = \prod_{i=1}^{g} (e_i + 1).
\end{equation*}

\end{proof}

This identity is precisely the well-known formula for the number of divisors of an element in a UFD based on its prime factorization. This consistency confirms that our character-theoretic approach properly accounts for the lack of unique factorization by means of the orthogonality relations, recovering the classical case when the class group is trivial.

As an immediate consequence of the Subsequence Counting Theorem, we recover the fundamental result for the arithmetic of Dedekind domains. The structure of the class group dictates divisibility in the integers ring through the following relationship:

\begin{lemma}
    Let $\alpha\in\mathcal{O}_K$ such that $n(\alpha)=2$. Then $\alpha$ is irreducible. 
\end{lemma}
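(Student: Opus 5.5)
Write $(\alpha)=\mathfrak{p}_1^{e_1}\cdots\mathfrak{p}_g^{e_g}$ with $\alpha\neq 0$. We will use the correspondence in the Corollary to Theorem~\ref{teo2}: $n(\alpha)$ equals the number of tuples $(f_1,\dots,f_g)$ with $0\le f_i\le e_i$ and $\sum f_i[\mathfrak{p}_i]=0_G$. Equivalently, $n(\alpha)$ is the number of ideal divisors $J=\prod\mathfrak{p}_i^{f_i}$ of $(\alpha)$ that are principal.

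The first step identifies two such divisors that are always counted. The tuple $(0,\dots,0)$ gives $J=\mathcal{O}_K=(1)$. The tuple $(e_1,\dots,e_g)$ gives $J=(\alpha)$. These are distinct as long as $(\alpha)\neq\mathcal{O}_K$. The case $\alpha=0$ is excluded, since $(\alpha)$ must have a factorization. If $\alpha$ were a unit, the sequence would be empty and $n(\alpha)=1$. So the hypothesis $n(\alpha)=2$ forces $\alpha$ to be a nonzero non-unit, and it says that $(1)$ and $(\alpha)$ are the \emph{only} principal ideal divisors of $(\alpha)$.

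The second step argues by contradiction. Suppose $\alpha=\beta\delta$ with $\beta,\delta\in\mathcal{O}_K$ both non-units (both are nonzero since $\alpha\neq0$). Then $(\alpha)=(\beta)(\delta)$. By unique factorization of ideals, $(\beta)$ divides $(\alpha)$, so $(\beta)=\prod\mathfrak{p}_i^{f_i}$ with $0\le f_i\le e_i$. Hence $(\beta)$ is a principal ideal divisor of $(\alpha)$ in the sense of the definitions, and so $(\beta)$ must be $(1)$ or $(\alpha)$. If $(\beta)=(1)$, then $\beta$ is a unit. If $(\beta)=(\alpha)$, then $(\delta)=(1)$ by cancellation in the group of fractional ideals, so $\delta$ is a unit. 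Either case contradicts our assumption, so $\alpha$ is irreducible.

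The step that needs the most care is the bookkeeping of the correspondence. We must check that distinct exponent tuples give distinct ideals, which follows from unique factorization. We must also check that every principal ideal dividing $(\alpha)$ arises from some tuple, which follows from the Dedekind property that to contain is to divide. These facts ensure that "$n(\alpha)=2$" really means "exactly $(1)$ and $(\alpha)$". Beyond that the argument is routine.
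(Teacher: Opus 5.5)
Your proof is correct and is essentially the argument the paper has in mind. The paper gives no separate proof and calls the lemma an immediate consequence of the counting corollary, and its following example sketches the same reasoning (no proper zero-sum subsequence, hence no factorization into non-units); you make this rigorous by showing that $n(\alpha)=2$ excludes units and that any factorization $\alpha=\beta\delta$ into non-units would give a principal ideal divisor $(\beta)$ different from $(1)$ and $(\alpha)$.
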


\begin{example}
Let $\alpha \in \mathcal{O}_K$  such that its associated principal ideal factors as $(\alpha) = \mathfrak{p}_1 \mathfrak{p}_2$ or $(\alpha) = \mathfrak{p}_1 \mathfrak{p}_2 \mathfrak{p}_3$, where each $\mathfrak{p}_i$ is a non-principal prime ideal. Under these conditions, $\alpha$ is an irreducible element in $\mathcal{O}_K$. 

\begin{itemize}
    \item \textbf{Case 1:} Irreducibility is immediate since no prime factor is principal by hypothesis.
    
    \item \textbf{Case 2:} It suffices to note that any product of the form $\mathfrak{p}_i \mathfrak{p}_j$ must necessarily be a non-principal ideal; otherwise, if $\mathfrak{p}_i \mathfrak{p}_j = (\gamma)$, then the ideal $(\alpha) = (\gamma) \mathfrak{p}_k$ would be non-principal for being the product of a principal ideal and a non-principal one, which contradicts the principality of $(\alpha)$. 
\end{itemize}

Therefore, the sequence of classes of $(\alpha)$ does not possess proper zero-sum subsequences, guaranteeing that $\alpha$ does not admit a factorization into non-trivial elements.
\end{example}

The above is not true for four or more factors; for this, consider the field $K=\mathbb{Q}(\sqrt{399})$ and the non-principal prime ideals $\mathfrak{p}_1=(2,1+\sqrt{399})$, $\mathfrak{p}_2=(3,\sqrt{399})$, $\mathfrak{p}_3=(7,\sqrt{399})$ and $\mathfrak{p}_4=(19,\sqrt{399})$. We have that 

\[ (\sqrt{399})=\mathfrak{p}_1\mathfrak{p}_2\mathfrak{p}_3\mathfrak{p}_4, \]

but 

\[ (19+\sqrt{399})=\mathfrak{p}_1\mathfrak{p}_2 \quad \text{and} \quad (2+\sqrt{399})=\mathfrak{p}_3\mathfrak{p}_4.  \]

\vskip20pt\noindent {\bf Acknowledgements.} We thank the SECIHTI   for the valuable support to first author  Ángel Martínez-Avelar  CVU 1036860. The authors also acknowledge the use of SageMath \cite{Sage} for computational verification of the examples presented in this work.

\end{document}